\documentclass[a4paper,12pt]{article}

\textwidth=160mm
\textheight=235mm
\setlength{\topmargin}{-10mm}
\setlength{\evensidemargin}{0mm}
\setlength{\oddsidemargin}{0mm}

\usepackage{verbatim}
\usepackage{amsmath}
\usepackage{amssymb}
\usepackage{amsthm}
\usepackage{graphicx}
\usepackage[all]{xy}

\newtheorem{thm}{Theorem}[section]
\newtheorem{theorem}[thm]{Theorem}
\newtheorem{prop}[thm]{Proposition}

\newtheorem{cor}[thm]{Corollary}

\newtheorem{lemma}[thm]{Lemma}
\newtheorem{problem}[thm]{Problem}

\theoremstyle{definition}

\newtheorem{remark}[thm]{Remark}


\newcommand{\lmd}[1]{\hbox to2.7em{$ \lambda_{#1} \hfill$}}
\newcommand{\tlmd}[1]{\hbox to2.7em{$ |\lambda_{#1}| \hfill$}}
\newcommand{\llmd}[2]{\hbox to2.7em{$\lambda_{#2}^{(#1)} \hfill$ }}
\newcommand{\vlmd}[2]{\hbox to2.7em{$ |\lambda_{#2}^{(#1)}| \hfill $}}

\newcommand{\bxcdots}{\hbox to2.7em{$ \cdots \hfill $}}
\newcommand{\bxvdots}{\hbox to2.7em{$ \vdots \hfill $}}
\newcommand{\bxddots}{\hbox to2.7em{$ \ddots \hfill $}}

\newcommand{\re}{\mathrm{Re}\,}
\newcommand{\im}{\mathrm{Im}\,}

%
%

\newcommand{\reg}{\mathrm{reg}}

\newcommand{\gammatilde}{{\widetilde{\gamma}}}

\newcommand{\bC}{\mathbb{C}}

\newcommand{\bP}{\mathbb{P}}

\newcommand{\bR}{\mathbb{R}}

\newcommand{\frakm}{\mathfrak{m}}
\newcommand{\frakX}{\mathfrak{X}}

\newcommand{\scM}{\mathcal{M}}
\newcommand{\scO}{\mathcal{O}}

\newcommand{\PD}{\operatorname{PD}}
\newcommand{\Int}{\operatorname{Int}}

\newcommand{\po}{\mathfrak{PO}}

\title{Potential functions via toric degenerations}
\author{Takeo Nishinou, Yuichi Nohara, and Kazushi Ueda}
\date{}

\begin{document}

\maketitle

\begin{abstract}
This is a short companion paper to \cite{Nishinou-Nohara-Ueda_TDGCSPF}.
We construct an integrable system
on an open subset of a Fano manifold equipped with a toric degeneration,
and compute the potential function for its Lagrangian torus fibers
if the central fiber is a toric Fano variety
admitting a small resolution.
\end{abstract}

\section{Introduction}

An {\em integrable system}
is a set $\{ \Phi_i \}_{i=1}^N$ of $N$ functions
on a symplectic manifold $(M, \omega)$
of dimension $2 N$
which are functionally independent
and mutually Poisson-commutative;
$$
 \{ \Phi_i, \Phi_j \} = 0, \qquad i, j = 1, \dots, N.
$$
Here, a set $\{ \Phi_i \}_{i=1}^N$ of functions
on a manifold $M$ is said to be {\em functionally independent}
if there is an open dense subset $U \subset M$
where the differentials $\{ d \Phi_i \}_{i=1}^N$ are linearly independent.
An integrable system
$\{ \Phi_i \}_{i=1}^N$ defines a Hamiltonian $\bR^N$ action on $M$,
and any regular compact connected fiber of
$\Phi = (\Phi_1, \dots, \Phi_N) : M \to \bR^N$
is a torus by the Arnold-Liouville theorem.

For a Lagrangian submanifold $L$ in a symplectic manifold,
the cohomology group $H^*(L; \Lambda_0)$
with coefficient in the Novikov ring $\Lambda_0$
has a structure of a weak $A_\infty$-algebra
by the fundamental work of Fukaya, Oh, Ohta and Ono
\cite{Fukaya-Oh-Ohta-Ono}.
A solution to the Maurer-Cartan equation
$$
 \sum_{k=0}^\infty \frakm_k(b, \dots, b) \equiv 0
  \mod \PD([L])
$$
is called a weak bounding cochain,
which can be used to define the deformed Floer cohomology.
The potential function is a map
$
 \po : \scM(L) \to \Lambda_0
$
from the moduli space $\scM(L)$ of weak bounding cochains 
such that
$$
 \sum_{k=0}^\infty \frakm_k(b, \dots, b) = \po(b) \cdot \PD([L]).
$$

The moment map for the torus action
on a toric Fano manifold
with respect to a torus-invariant K\"{a}hler form
provides an example of an integrable system.
The potential function for its Lagrangian torus fiber
is computed by Cho and Oh \cite{Cho-Oh} and
Fukaya, Oh, Ohta and Ono \cite{FOOO_toric_I}.

We discuss the following problem in this paper:

\begin{problem} \label{prob:main}\ 
\begin{enumerate}
 \item
Which Fano manifold admits a structure of an integrable system?
 \item
Compute the potential function
for Lagrangian torus fibers in such cases.
\end{enumerate}
\end{problem}

Motivated by the work of Batyrev et al.
\cite{Batyrev_TD,
Batyrev-Ciocan-Fontanine-Kim-van_Straten_CT,
Batyrev-Ciocan-Fontanine-Kim-van_Straten_MSTD},
we assume that a Fano manifold $X$ has a toric degeneration,
so that there is a flat family
$$
 f : \frakX \to B
$$
of projective varieties over some base $B$ containing
two points $0$ and $1$
such that $X_t$ is smooth for general $t \in B$,
$X_1$ is isomorphic to $X$ and
the central fiber $X_0$ is a toric variety.
Choose a piecewise smooth path $\gamma : [0, 1] \to B$
such that $\gamma(0) = 0$, $\gamma(1) = 1$ and
$X_{\gamma(t)}$ is smooth for $t \in (0, 1]$.
Then the symplectic parallel transport
along $\gamma$ gives a symplectomorphism
$$
 \gammatilde : X_0^\reg \to X_1^\reg
$$
from the regular locus $X_0^\reg$ of $X_0$
to an open subset $X_1^\reg$ of $X_1$.
By transporting the toric integrable system
$$
 \Phi_0 : X_0 \to \bR^N
$$
to $X_1^\reg$ by $\gammatilde$,
one obtains an integrable system
$$
 \Phi = \Phi_0 \circ \gammatilde^{-1} : X_1^\reg \to \bR^N
$$
on $X_1^\reg$. Let
$$
 \ell_i(u) = \langle v_i, u \rangle - \tau_i
$$
be the affine functions
defining the faces of the moment polytope;
$$
 \Delta
  = \Phi_0(X_0)
  = \{ u \in \bR^N \mid \ell_i(u) \ge 0, \quad i = 1, \dots, m \}.
$$
The proof of \cite[Theorem 10.1]{Nishinou-Nohara-Ueda_TDGCSPF}
immediately gives the following:

\begin{theorem} \label{th:main}
Assume that $X_0$ is a Fano variety
admitting a small resolution.
Then for any $u \in \Int \Delta$,
one has an inclusion
$$
 H^1(L(u); \Lambda_0) \subset \scM(L(u))
$$
for the Lagrangian torus fiber $L(u) = \Phi^{-1}(u)$,
and the potential function is given by
$$
 \po(x) = \sum_{i=1}^m e^{\langle v_i, x \rangle} T^{\ell_i(u)}
$$
for $x \in H^1(L(u), \Lambda_0)$.
\end{theorem}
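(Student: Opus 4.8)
The plan is to abstract the disk-counting argument that proves \cite[Theorem 10.1]{Nishinou-Nohara-Ueda_TDGCSPF} and combine it with the toric potential function computed by Cho--Oh \cite{Cho-Oh} and Fukaya--Oh--Ohta--Ono \cite{FOOO_toric_I}. Since $\Phi = \Phi_0 \circ \gammatilde^{-1}$, the fiber $L(u) = \Phi^{-1}(u)$ equals $\gammatilde(\Phi_0^{-1}(u))$, so $L(u) \subset X_1^\reg$ is, together with a neighbourhood, symplectomorphic to the torus fiber $L_0(u) = \Phi_0^{-1}(u)$ lying in the regular locus $X_0^\reg$. First I would fix a small resolution $\pi \colon \widetilde{X}_0 \to X_0$, chosen toric, so that $\widetilde{X}_0$ is a smooth complete toric variety whose fan has the same rays $v_1, \dots, v_m$, the exceptional set $E = \pi^{-1}(X_0^\sing)$ has complex codimension at least two, and $\pi$ is crepant and restricts to an isomorphism $\widetilde{X}_0 \setminus E \xrightarrow{\ \sim\ } X_0^\reg$. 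Since $X_0$ is Fano and $\pi$ is crepant, $-K_{\widetilde{X}_0} = \pi^\ast(-K_{X_0})$ is nef and big and pairs trivially only with curve classes supported on $E$.

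The heart of the argument is to transcribe the degeneration analysis of \cite{Nishinou-Nohara-Ueda_TDGCSPF}. As $t \to 0$ along the family $f$, a Maslov-index-two holomorphic disk in $X_1$ bounded by $L(u)$ limits to a stable holomorphic disk in $X_0$ bounded by $L_0(u)$. Here one uses that $X_0$ is Fano: every sphere component of the limit would carry strictly positive $(-K_{X_0})$-degree, which is incompatible with the total Maslov index being two, so the limit is a single nonconstant holomorphic disk $u_0$ bounded by $L_0(u)$; moreover, as in \cite{Nishinou-Nohara-Ueda_TDGCSPF} the smallness of $\pi$ forces $u_0$ to stay in $X_0^\reg$. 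Viewing $u_0$ inside $\widetilde{X}_0 \setminus E$ and using that $L_0(u)$, for $u \in \Int \Delta$, meets none of the toric divisors of $\widetilde{X}_0$, the intersection numbers of $[u_0]$ with those divisors are nonnegative and sum to $1$; hence $[u_0]$ hits exactly one toric divisor once, and therefore equals one of the basic classes $\beta_i$ whose moduli spaces are described by Cho--Oh and Fukaya--Oh--Ohta--Ono. Conversely each such disk persists to the general fiber, and the open Gromov--Witten counts are preserved. Consequently only the classes $\beta_i$ contribute, each with the monomial $e^{\langle v_i, x \rangle} T^{\ell_i(u)}$, and summation over $i = 1, \dots, m$ yields the asserted formula for $\po$. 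The inclusion $H^1(L(u); \Lambda_0) \subset \scM(L(u))$ then follows exactly as in the toric Fano case treated in \cite{FOOO_toric_I}: once the only contributing classes are the basic ones, $\sum_{k=0}^\infty \frakm_k(b, \dots, b)$ is a multiple of $\PD([L(u)])$ for every $b \in H^1(L(u); \Lambda_0)$, with no convergence obstruction.

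I expect the main obstacle to be the control of the limiting behaviour of the disks as $t \to 0$: one has to rule out components escaping into $X_0^\sing$ and verify that the virtual counts are unchanged under the degeneration. This is precisely the analysis carried out in \cite{Nishinou-Nohara-Ueda_TDGCSPF}, so the remaining task is to check that nothing in that argument relies on special features of the Gelfand--Cetlin situation beyond the existence of the toric degeneration $f$, the symplectomorphism $\gammatilde$ onto $X_0^\reg$, and the hypothesis that $X_0$ is a toric Fano variety admitting a small resolution. Granting this, the proof of \cite[Theorem 10.1]{Nishinou-Nohara-Ueda_TDGCSPF} applies essentially verbatim.
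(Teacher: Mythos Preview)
Your proposal is correct and is exactly the approach the paper intends: the paper gives no independent proof of this theorem but simply asserts that the proof of \cite[Theorem~10.1]{Nishinou-Nohara-Ueda_TDGCSPF} immediately yields the statement, and what you have written is precisely a sketch of that argument (degenerate Maslov-two disks to the central fiber, use Fano plus smallness to exclude bubbling and escape into the singular locus, identify the limits with the basic classes $\beta_i$ on the small resolution via Cho--Oh and \cite{FOOO_toric_I}, and check invariance of the counts). Your closing remark that the only task is to verify that nothing in the companion paper's analysis is specific to the Gelfand--Cetlin setting is exactly the point of the paper's one-line justification.
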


As a corollary,
one obtains
a non-displaceable Lagrangian torus
just as in \cite[Theorem 1.5]{FOOO_toric_I}:

\begin{cor}
If a Fano manifold $X$ admits a flat degeneration
into a toric Fano variety with a small resolution,
then there is a Lagrangian torus $L$ in $X$
satisfying
$$
 \psi(L) \cap L \ne \emptyset
$$
for any Hamiltonian diffeomorphism $\psi : X \to X$.
\end{cor}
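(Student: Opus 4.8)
The proof follows the standard route of \cite{FOOO_toric_I}: one exhibits a fiber $L(u)$ together with a weak bounding cochain $x$ which is a critical point of the potential function, and then uses the non-vanishing of the associated deformed Floer cohomology. The role of Theorem \ref{th:main} is to reduce everything to data attached to the moment polytope $\Delta$ of $X_0$.

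Fix $u \in \Int\Delta$. By Theorem \ref{th:main} one has $H^1(L(u); \Lambda_0) \subset \scM(L(u))$ for the Lagrangian torus fiber $L(u) = \Phi^{-1}(u)$ in $X \cong X_1$, with potential function
$$
 \po(x) = \sum_{i=1}^m e^{\langle v_i, x \rangle}\, T^{\ell_i(u)}, \qquad x \in H^1(L(u); \Lambda_0),
$$
which is formally the potential function of the toric Fano variety $X_0$ with polytope $\Delta$ computed in \cite{Cho-Oh, FOOO_toric_I}. The critical point equations $\partial \po / \partial x_j = \sum_{i=1}^m (v_i)_j\, e^{\langle v_i, x\rangle} T^{\ell_i(u)} = 0$, $j = 1, \dots, N$, depend only on $\Delta$ and the primitive inward normals $v_1, \dots, v_m$ of its facets. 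Since $\Delta$ is bounded its normal fan is complete, so the $v_i$ positively span $\bR^N$; this is exactly the hypothesis of the ``leading term equation'' analysis of \cite{FOOO_toric_I}, which produces a point $u \in \Int\Delta$ and a solution $\mathfrak{y} \in (\bC^\times)^N$ of the (possibly multi-stage) leading term equation attached to $(\Delta, \{v_i\})$. Lifting $\mathfrak{y}$ gives $x \in H^1(L(u); \Lambda_0)$, with each $e^{\langle v_i, x\rangle}$ a unit of $\Lambda_0$, such that $d\po(x) = 0$. The main point requiring care will be this step, the delicate issue being that $\Delta$, as the moment polytope of a possibly singular toric Fano variety, need not be Delzant; however, solvability of the leading term equation is a statement about the combinatorics of $(\Delta, \{v_i\})$ alone, so the argument of \cite{FOOO_toric_I} carries over.

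Granting the existence of such a pair $(u, x)$, set $L = L(u)$. Since $x$ is a critical point of $\po$, the Floer-theoretic computations of \cite{FOOO_toric_I} give
$$
 HF\big((L, x), (L, x); \Lambda_0\big) \cong H^*(T^N; \Lambda_0) \ne 0.
$$
If a Hamiltonian diffeomorphism $\psi : X \to X$ satisfied $\psi(L) \cap L = \emptyset$, then $HF(L, \psi(L); \Lambda_0) = 0$ for trivial reasons, and by the Hamiltonian invariance of Floer cohomology this would force $HF((L, x), (L, x); \Lambda_0) = 0$, a contradiction. Hence $\psi(L) \cap L \ne \emptyset$ for every Hamiltonian diffeomorphism $\psi$, and $L = L(u)$ is the desired non-displaceable Lagrangian torus in $X$.
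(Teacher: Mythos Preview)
Your proposal is correct and follows exactly the route the paper indicates: the paper gives no separate proof but simply remarks that the corollary is obtained ``just as in \cite[Theorem 1.5]{FOOO_toric_I}'', and your argument is precisely that one---use Theorem~\ref{th:main} to identify $\po$ with the toric potential of $X_0$, invoke the leading-term analysis of \cite{FOOO_toric_I} to locate $(u,x)$ with $d\po(x)=0$, and conclude non-displaceability from $HF\ne 0$. You even flag the non-Delzant caveat, which the paper leaves implicit.
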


The organization of this paper is as follows:
In Section \ref{sc:N2}, we illustrate Theorem \ref{th:main}
with an example of a toric degeneration.
In Section \ref{sc:quadric},
we discuss toric degenerations of quadric hypersurfaces
and their relation with Gelfand-Cetlin systems for orthogonal groups.
In Section \ref{sc:vc},
we show that the vanishing cycle on the quadric surface
with respect to the degeneration into the weighted projective plane
$\bP(1, 1, 2)$ is the image of the anti-diagonal Lagrangian submanifold
in $\bP^1 \times \bP^1$ by the Segre embedding.
In Section \ref{sc:f2}, we discuss an alternative approach
to compute the potential function for a Lagrangian torus in $S^2 \times S^2$
originally due to Auroux \cite{Auroux_SLFWCMS} and
Fukaya, Oh, Ohta and Ono \cite{FOOO_toric_deg},
which can also be applied to cubic surfaces.

%
%
%

\section{Complete intersection of two quadrics in $\bP^5$}
 \label{sc:N2}

It is known by Newstead \cite{Newstead}
and Narasimhan and Ramanan \cite{Narasimhan-Ramanan}
that the moduli space of stable rank two vector bundles
with a fixed determinant of odd degree
on a genus two curve defined as the double cover of $\bP^1$
branched over $\{ \omega_0, \dots, \omega_5 \} \subset \bC$
is a complete intersection $X = Q_1 \cap Q_2$ of two quadrics
$$
 Q_1 : \sum_{i=0}^{5} z_i^2 = 0
$$
and
$$
 Q_2 : \sum_{i=0}^{5} \omega_i z_i^2 = 0
$$
in $\bP^5$.
Since the total Chern class of the tangent bundle $T_X$ is
given by
\begin{align*}
 c(T_X)
  = \frac{c(T_{\bP^5}|_X)}{c(N_{X/\bP^5})}
  = \frac{(1 + \omega)^6}{(1 + 2 \omega)^2}
  = 1 + 2 \omega + 3 \omega^2,
\end{align*}
the top Chern class and hence the Euler number of $X$ vanishes.
Since the cohomology ring of a complete intersection is non-trivial
only at the middle dimension,
the cohomology group of $X$ has rank eight.
We equip $X$ with the K\"ahler form
$\omega = \lambda \omega_{\mathrm{FS}}|_X$
where $\lambda > 0$ and
$\omega_{\mathrm{FS}}$ is the Fubini-Study form.

Fano complete intersections in projective spaces
admit several toric degenerations in general.
In the case of two quadrics as above,
one possible degeneration is the complete intersection of
$$
 z_0 z_1 = z_2 z_3 \quad \text{and} \quad z_2 z_3 = z_4 z_5,
$$
which has a torus action given by
$$
 [z_0 : z_1 : z_2 : z_3 : z_4 : z_5]
  \mapsto
   [\alpha z_0 : \beta z_1 : \gamma z_2 : \alpha \beta \gamma^{-1} z_3
     : \alpha \beta z_4 : z_5],
$$
so that the moment polytope is the convex hull of
$$
 \{ (\lambda, 0, 0), (0, \lambda, 0), (0, 0, \lambda),
    (\lambda, \lambda, - \lambda), (\lambda, \lambda, 0), (0, 0, 0) \},
$$
which is an octahedron.
The central fiber has six ordinary double points
and admits a small resolution.
The defining inequalities for the moment polytope are
\begin{align*}
 \ell_1(u) &= \langle (0, 1, 1), u \rangle \ge 0, \\
 \ell_2(u) &= \langle (-1, 0, 0), u \rangle + \lambda \ge 0, \\
 \ell_3(u) &= \langle (0, -1, 0), u \rangle + \lambda \ge 0, \\
 \ell_4(u) &= \langle (1, 0, 1), u \rangle \ge 0, \\
 \ell_5(u) &= \langle (0, 1, 0), u \rangle \ge 0, \\
 \ell_6(u) &= \langle (-1, 0, -1), u \rangle + \lambda \ge 0, \\
 \ell_7(u) &= \langle (0, -1, -1), u \rangle + \lambda \ge 0, \\
 \ell_8(u) &= \langle (1, 0, 0), u \rangle \ge 0,
\end{align*}
so that the potential function is given by
\begin{align*}
 \po
  &= e^{x_2 + x_3} T^{u_2 + u_3}
      + e^{- x_1} T^{- u_1 + \lambda}
      + e^{- x_2} T^{- u_2 + \lambda}
      + e^{x_1 + x_3} T^{u_1 + u_3}
      + e^{x_2} T^{u_2} \\
  & \qquad + e^{- x_1 - x_3} T^{- u_1 - u_3 + \lambda}
      + e^{- x_2 - x_3} T^{- u_2 - u_3 + \lambda}
      + e^{x_1} T^{u_1} \\
  &= y_2 y_3 + \frac{Q}{y_1} + \frac{Q}{y_2} + y_1 y_3 + y_2
      + \frac{Q}{y_1 y_3} + \frac{Q}{y_2 y_3} + y_1.
\end{align*}
By equating the partial derivatives
\begin{align*}
 \frac{\partial \po}{\partial y_1}
  &= - \frac{Q}{y_1^2} + y_3 - \frac{Q}{y_1^2 y_3} + 1, \\
 \frac{\partial \po}{\partial y_2}
  &= y_3 - \frac{Q}{y_2^2} + 1 - \frac{Q}{y_2^2 y_3}, \\
 \frac{\partial \po}{\partial y_3}
  &= y_2 + y_1 - \frac{Q}{y_1 y_3^2} - \frac{Q}{y_2 y_3^2}
\end{align*}
with zero,
one obtains non-isolated critical points
defined by
\begin{align*}
  y_2 = - y_1, \quad &\text{and} \quad y_3 = - 1, \\
  y_2 = - y_1, \quad &\text{and} \quad y_3 = \frac{Q}{y_1^2},
 \qquad \text{or} \\
  y_2 = \frac{Q}{y_1}, \ \, \quad &\text{and} \quad y_3 = -1,
\end{align*}
and four non-degenerate critical points
defined by
\begin{align*}
 y_1^4 = Q^2, \qquad
 y_2 = \frac{y_1^3}{Q}, \qquad
 y_3 = \frac{y_1^2}{Q}.
\end{align*}
The valuations of the latter four critical points
are given by
$$
 (u_1, u_2, u_3) = (\lambda / 2, \lambda / 2, 0),
$$
and lies in the interior of the moment polytope.
The existence of non-isolated critical points
whose valuations lie in the moment polytope implies
the existence of a continuum of
non-displaceable Lagrangian tori.
See Fukaya, Oh, Ohta and Ono \cite[Theorem 1.1]{FOOO_toric_II}
for toric examples.

\section{Quadric hypersurfaces and Gelfand-Cetlin systems}
 \label{sc:quadric}

Let 
\[
  X = \{ [x_1:x_2:\dots:x_n] \in \bP^{n-1} \, | \, 
   x_1^2 + x_2^2 + \dots + x_n^2 = 0 \}
\]
be a quadric hypersurfaces in $\bP^{n-1}$ equipped with
a K\"ahler form
$\omega = \lambda \omega_{\mathrm{FS}}|_X$ ($\lambda > 0$).
In this case, we have two families
$$
 \frakX^3 = \{ ([x_1 : \cdots : x_n], t) \in \bP^{n-1} \times \bC
  \mid x_1^2 + x_2^2 + x_3^2 + t(x_4^2 \dots + x_{n+1}^2) = 0 \}
$$
and
$$
 \frakX^4 = \{ ([x_1 : \cdots : x_n], t) \in \bP^{n-1} \times \bC
  \mid x_1^2 + \dots + x_4^2 + t (x_5^2 + \dots + x_n^2) = 0 \}
$$
whose general fibers are isomorphic to $X$
and special fibers $X_0^3$ and $X_0^4$ are toric Fano varieties.
The quadric hypersurface $X_0^4$ of rank four has a small resolution,
so that one can apply Theorem \ref{th:main} to compute the potential function.
When $n = 4$, $X^4_0 = X$ is $\bP^1 \times \bP^1$
and $X^3_0$ is the weighted projective plane $\bP (1,1,2)$,
or equivalently,
the blow-down of the $(-2)$-curve
in the Hirzebruch surface
$F_2 = \bP(\scO_{\bP^1} \oplus \scO_{\bP^1}(2))$.

Let us discuss the degeneration $\frakX^3$ in more detail.
Since the central fiber is isomorphic to
$$
 {X^3_0}' = \{ [x_1':\dots:x_n'] \in \bP^n \mid 2 x_1' x_2' + x_3'^2 = 0 \}
$$
with the torus action given by
\[
  [x_1':x_2':\dots:x_n'] \longmapsto
  [\tau_2^{-1}\tau_3^2 x_1': \tau_2 x_2': \tau_3 x_3':
   \dots :\tau_n x_n']
\]
for 
$(\tau_2, \tau_3, \dots , \tau_n) \in 
(S^1)^{n-1}/(\text{diagonal})$,
its moment map
\[
  \mu_{T^{n-2}} : 
  \bP^{n-1} \to \{ (u_2, u_3, \dots, u_n) \in \bR^{n-1} 
  \,| \,
  \sum u_i = 1 \}
  \cong \bR^{n-2},
\]
with respect to $\lambda \omega_{\mathrm{FS}}$ is given by
\[
  \mu_{T^{n-2}} (x')
  = \lambda \left( 
  \frac{-|x_1'|^2+|x_2'|^2}{\|x' \|^2},
  \frac{2|x_1'|^2+|x_3'|^2}{\|x' \|^2},
  \frac{|x_4'|^2}{\|x' \|^2},
  \dots,
  \frac{|x_n'|^2}{\|x'\|^2}
  \right),
\]
where $\|x'\|^2 = \sum_{i=1}^n |x_i'|^2$.
Since the isomorphism between $X_0^3$ and ${X_0^3}'$ 
is given by
\[
  \begin{pmatrix}
   x_1' \\ x_2' \\ x_3' \\ \vdots \\ x_n'
  \end{pmatrix}
  =
  \begin{pmatrix}
   1/\sqrt 2 & \sqrt{-1}/\sqrt{2} &&& \\ 
   1/\sqrt 2 & -\sqrt{-1}/\sqrt{2} &&& \\
   &&1 && \\
   &&& \ddots & \\ 
   &&&& 1
  \end{pmatrix}
  \begin{pmatrix}
   x_1 \\ x_2 \\ x_3 \\ \vdots \\ x_n
  \end{pmatrix},
\]
the moment map is written as
\begin{equation*}
  \mu_{T^{n-2}} (x)
  = \lambda \left( 
  \frac{\sqrt{-1} (x_1 \overline{x}_2 - \overline{x}_1 x_2)}
       {\|x\|^2},
  \frac{|x_1 + \sqrt{-1} x_2|^2 +|x_3|^2}
       {\|x\|^2},
  \frac{|x_4|^2}{\|x\|^2},
  \dots,
  \frac{|x_n|^2}{\|x\|^2}
  \right)
\end{equation*}
in terms of $x$-coordinates.
Now we change the coordinates on the torus 
so that the moment map
$(\nu_2, \dots, \nu_{n-1}) : \bP^{n-1} \to \bR^{n-2}$
is given by
\begin{equation*}
 \begin{aligned}
  \nu_2(x) 
   &= \frac{\sqrt{-1} \lambda (x_1 \overline{x}_2 - \overline{x}_1 x_2)}
           {\|x\|^2},\\
  \nu_k (x)
   &= \frac{\lambda \sum_{i=1}^{k} |x_i|^2}{\|x\|^2}
    = \lambda 
    - \frac{\lambda \sum_{i=k+1}^{n} |x_i|^2}{\|x\|^2},
   \quad k=3, \dots, n-1.
  \end{aligned}
\end{equation*}
Then the moment polytope is defined by
\[
  \lambda \ge \nu_{n-1} \ge \dots \ge \nu_3 \ge |\nu_2|.
\]

On the other hand, the fact that $X$ is isomorphic to 
a generalized flag manifold 
$SO(n)/S(O(2) \times O(n-2))$ of type $B$ or $D$
enables us to construct a completely integrable system on $X$ 
by using the same method as in \cite{Guillemin-Sternberg_GCS}.
We recall the construction in more general setting.
Let $X = SO(n,\bC)/P$ be a generalized flag manifold
of type $B$ or $D$.
Hereafter we identify the Lie algebra $\mathfrak{so}(n)$ of $SO(n)$ 
with its dual by an invariant inner product.
Then $X$ can be identified with an adjoint orbit in $\mathfrak{so}(n)$
of a point in the positive Weyl chamber, 
i.e. a matrix of the form
\[
  \begin{pmatrix}
    0 & \lambda_1 \\
    -\lambda_1 & 0 \\
    && \ddots \\
    &&& 0 & \lambda_m \\
    &&& -\lambda_m & 0 \\
    &&&&& 0
  \end{pmatrix}
\]
with $\lambda_1 \ge \dots \ge \lambda_m \ge 0$ 
if $n=2m+1$ is odd, and
\[
  \begin{pmatrix}
    0 & \lambda_1 \\
    -\lambda_1 & 0 \\
    && \ddots \\
    &&& 0 & \lambda_m \\
    &&& -\lambda_m & 0 
  \end{pmatrix}
\]
with $\lambda_1 \ge \dots \ge \lambda_{m-1} \ge |\lambda_m|$ 
if $n=2m$ is even.
For each $k < n$, we regard $SO(k)$ as a subgroup of $SO(n)$ by
\[
  SO(k) \cong \left( \begin{array}{c|c}
             SO(k) & 0 \\
            \hline  0 &1_{n-k}
          \end{array} \right)
  \subset SO(n),
\]
and write the moment map of its action on $X$ as 
$\mu_{SO(k)} : X \to \mathfrak{so}(k)$.
For each $x \in X$, 
let $(\lambda_i^{(k)}(x))_i$ be the intersection of the adjoint
orbit of $\mu_{SO(k)}(x) \in \mathfrak{so}(k)$ with the positive 
Weyl chamber.
Then the collection of the functions $\lambda_i^{(k)}$ 
for $k=2, \dots, n-1$ and $1 \le i \le k/2$
gives a completely integrable system on $X$,
which is called the {\it Gelfand-Cetlin system}.
It is easy to see that its image is a polytope defined by 
\begin{equation}
 \begin{alignedat}{9}
  \lmd 1 && \lmd 2 && \lmd 3 && \bxcdots & \lmd m & \\
  & \llmd {2m}1 && \llmd {2m}2 && \bxcdots & \llmd{2m}{m-1} && \vlmd {2m}m \\
  && \llmd {2m-1}1 && \llmd {2m-1}2 && \bxcdots & \llmd{2m-1}{m-1} &\\
  &&& \llmd {2m-2}1 && \bxcdots & \llmd{2m-2}{m-2} && \vlmd{2m-2}{m-1} \\
  &&&& \bxddots && && \bxvdots \\
  &&&&& \llmd 51 && \llmd 52 & \\
  &&&&&& \llmd 41 && \vlmd 42 \\
  &&&&&&& \llmd 31 & \\
  &&&&&&&& \vlmd 21
 \end{alignedat} 
\label{eq:GC_pattern-B}
\end{equation}
if $n=2m+1$ is odd, and
\begin{equation}
 \begin{alignedat}{9}
  \lmd 1 && \lmd 2 && \bxcdots && \lmd{m-1} && \tlmd m  \\
  & \llmd {2m-1}1 && \llmd {2m-1}2 && \bxcdots && \llmd{2m-1}{m-1} &\\
  && \llmd {2m-2}1 && \bxcdots && \llmd{2m-2}{m-2} && \vlmd{2m-2}{m-1} \\
  &&& \llmd {2m-3}1 && \bxcdots && \llmd{2m-3}{m-2} &\\
  &&&& \bxddots && && \bxvdots \\
  &&&&& \llmd 51 && \llmd 52 & \\
  &&&&&& \llmd 41 && \vlmd 42 \\
  &&&&&&& \llmd 31 & \\
  &&&&&&&& \vlmd 21
  \end{alignedat}
  \label{eq:GC_pattern-D}
\end{equation}
if $n=2m$ is even, where 
\[
  \begin{matrix}
    a && c \\
    & b
  \end{matrix}
\]
means $a \ge b \ge c$.
The polytope defined by
(\ref{eq:GC_pattern-B}) or (\ref{eq:GC_pattern-D})
is called the {\it Gelfand-Cetlin polytope}.

In our situation, $X$ is identified with the orbit of
$(\lambda \ge 0 \ge \dots \ge 0)$,
and hence $\lambda_i^{(k)}$ are zero except for 
$\lambda_1^{(k)}$ for $k=2, \dots, n-1$.
In particular, the Gelfand-Cetlin polytope is defined by
\[
  \lambda \ge \lambda_1^{(n-1)} \ge 
  \dots \ge \lambda_1^{(3)} \ge |\lambda_1^{(2)}|,
\]
which coincides with the moment polytope of $X_0^3$.

To construct a toric degeneration of the Gelfand-Cetlin system,
we introduce a degeneration of $X$ in stages as follows.
For $k=n, n-1, \dots, 3$, the $(n+1-k)$-th stage of 
the degeneration is defined by
\[
  \mathfrak{X}_k = 
  \{ ([x_i], t) \in \bP^{n-1} \times \bC \, | \,
  x_1^2 + \dots + x_{k-1}^2 + t x_k^2 = 0 \}.
\]
Then the fiber $X_{k,t}$ over $t \in \bC$ has actions of
$SO(k)$ and a subtorus
$T^{n-k}=\{(\tau_{k+1}, \dots, \tau_{n})\}$.
Let $\mu_{SO(k)} : \bP^{n-1} \to \mathfrak{so}(k)$ be a moment map
of the $SO(k)$-action on $\bP^{n-1}$.
Note that this is a natural extension of the moment map on $X$.
In terms of the homogeneous coordinates, $\mu_{SO(k)}$ is expressed as
\[
  \mu_{SO(k)} (x) =
  \frac{\sqrt{-1} \lambda}{\|x\|^2}
  \bigl( x_i \overline{x}_j - \overline{x}_i x_j \bigr)_{i,j=1, \dots,k}.
\]
A point in the positive Weyl chamber corresponding to 
the image $\mu_{SO(k)} (x)$ is also denoted by
$(\lambda^{(k)}_1 \ge 0 \ge \dots \ge 0)$.
Note that $\lambda_1^{(2)}$ is given by
\[
  \lambda_1^{(2)} 
  = \frac{\sqrt{-1} \lambda
          (x_1 \overline{x}_2 - \overline{x}_1 x_2)}
         {\|x\|^2},
\]
which coincides with $\nu_2$ on $\bP^{n-1}$.
On the other hand, $\lambda_1^{(k)}$ ($k \ge 3$) satisfies
\begin{equation*}
  \left( \lambda^{(k)}_1 \right)^2
  = - \sum_{1 \le i < j \le k} \lambda^2
      \left( \frac{x_i \overline{x}_j - \overline{x}_i x_j}
                   {\|x\|^2} \right)^2
  = \left( \frac{ \lambda \sum_{i=1}^k |x_i|^2}
                 {\|x\|^2} \right)^2
     - \left| \frac{\lambda \sum_{i=1}^k x_i^2}
                   {\|x\|^2} \right|^2.
\end{equation*}
In particular, $\lambda_1^{(k)}$ coincides with $\nu_k$ on
$X_{k+1,0} = \{ x_1^2+ \dots + x_k^2 = 0 \}$.
Using the same argument as in \cite{Nishinou-Nohara-Ueda_TDGCSPF}, we obtain the following:
\begin{prop}
For each stage, we define
\[
  \Phi_{k} = (\lambda_1^{(2)}, \dots, \lambda_1^{(k-1)},
              \nu_k, \dots, \nu_{n-1})
  : X_{k,t} \longrightarrow \bR^{n-2}.
\]
Then the sequence of $\Phi_k$
together with the gradient-Hamiltonian flow give a toric degeneration
of the Gelfand-Cetlin system.
\end{prop}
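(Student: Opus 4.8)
The plan is to deduce the proposition from the general construction of toric degenerations of completely integrable systems in \cite{Nishinou-Nohara-Ueda_TDGCSPF}, carried out one stage at a time. The structural point is that the stages fit into a chain: the central fiber of the stage $\mathfrak{X}_k$ is $X_{k,0} = \{ x_1^2 + \dots + x_{k-1}^2 = 0 \}$, which is literally the general fiber $X_{k-1,1}$ of the next stage $\mathfrak{X}_{k-1}$, while $X_{n,1}$ is the quadric $X$ and the last central fiber $X_{3,0} = \{ x_1^2 + x_2^2 = 0 \}$ is a toric variety whose moment polytope is the Gelfand-Cetlin polytope. It therefore suffices to prove that (i) each $\Phi_k$ is a completely integrable system on the fibers $X_{k,t}$, and (ii) for each $k$ the gradient-Hamiltonian flow gives a symplectomorphism $\gammatilde_k$ from $X_{k,0}^{\reg}$ onto an open dense subset of $X_{k,1}$ which intertwines $\Phi_k$ on the two fibers; composing the $\gammatilde_k$ then transports the toric moment map $\Phi_3|_{X_{3,0}}$ to the Gelfand-Cetlin system $\Phi_n|_X$.

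For (i): for $t \ne 0$ the fiber $X_{k,t}$ is isomorphic to the quadric $\{ x_1^2 + \dots + x_k^2 = 0 \} \subset \bP^{n-1}$, which carries commuting Hamiltonian actions of $SO(k)$ on $x_1, \dots, x_k$ and of the torus $T^{n-k}$ on $x_{k+1}, \dots, x_n$. The functions $\lambda_1^{(2)}, \dots, \lambda_1^{(k-1)}$ are the Gelfand-Cetlin functions of the nested subgroups $SO(2) \subset \dots \subset SO(k-1) \subset SO(k)$ and Poisson-commute among themselves by the method of \cite{Guillemin-Sternberg_GCS}; the functions $\nu_k, \dots, \nu_{n-1}$ are $n - k$ independent components of the $T^{n-k}$ moment map; and since each $\lambda_1^{(j)}$ with $j \le k-1$ depends only on $x_1, \dots, x_{k-1}$, it is $T^{n-k}$-invariant and Poisson-commutes with $\nu_k, \dots, \nu_{n-1}$. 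Hence $\Phi_k$ is an involutive family of $n - 2 = \dim_{\bC} X_{k,t}$ functions, and functional independence on an open dense set follows as in the Gelfand-Cetlin case; on $X_{k,0}$ the same holds with $SO(k)$ replaced by $SO(k-1)$.

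For (ii): the total space $\mathfrak{X}_k$, its projection $t$ to $\bC$, and the restriction of $\lambda \omega_{\mathrm{FS}}$ are all invariant under $G := SO(k-1) \times U(n-k)$ acting on $x_1, \dots, x_{k-1}$ and on $x_{k+1}, \dots, x_n$ while fixing $x_k$. Consequently the gradient-Hamiltonian vector field is $G$-invariant, and since $t$ itself is $G$-invariant the $G$-moment map is preserved along the gradient-Hamiltonian flow; this is exactly the equivariance of toric degenerations established in \cite{Nishinou-Nohara-Ueda_TDGCSPF}. In particular $\gammatilde_k$ preserves $\lambda_1^{(2)}, \dots, \lambda_1^{(k-1)}$ (which are functions of the $SO(k-1)$ moment map) and $\nu_k, \dots, \nu_{n-1}$, so $\Phi_k \circ \gammatilde_k = \Phi_k$. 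Finally, on the overlap $X_{k,1} = X_{k+1,0} = \{ x_1^2 + \dots + x_k^2 = 0 \}$ one has $\nu_k = \lambda_1^{(k)}$ by the identity recorded just before the proposition, so $\Phi_k|_{X_{k,1}} = \Phi_{k+1}|_{X_{k+1,0}}$; the intertwining relations chain up, and the composite of all the $\gammatilde_k$ carries $\Phi_3|_{X_{3,0}} = (\nu_2, \dots, \nu_{n-1})|_{X_{3,0}}$, the toric moment map of $X_{3,0}$, to $\Phi_n|_X$, the Gelfand-Cetlin system. This is precisely the statement that $\{ \Phi_k \}$ together with the gradient-Hamiltonian flow give a toric degeneration of the Gelfand-Cetlin system.

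The formal argument above is routine; I expect the real work to be the analytic input taken over from \cite{Nishinou-Nohara-Ueda_TDGCSPF}, namely that at each stage the gradient-Hamiltonian flow is defined for the time needed to reach the singular central fiber and yields a genuine symplectomorphism from $X_{k,0}^{\reg}$ onto an open dense subset of $X_{k,1}$. This rests on controlling the singularities of the central fibers $\{ x_1^2 + \dots + x_{k-1}^2 = 0 \}$, which are normal quadric cones for $k - 1 \ge 3$ but degenerate to a union of two hyperplanes at the last stage; it is here, rather than in the equivariance bookkeeping, that the main difficulty lies.
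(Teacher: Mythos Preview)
Your argument is correct and is precisely what the paper intends: the paper offers no proof beyond the sentence ``using the same argument as in \cite{Nishinou-Nohara-Ueda_TDGCSPF}'', and your stagewise $SO(k-1)\times T^{n-k}$-equivariance of the gradient-Hamiltonian flow together with the identity $\lambda_1^{(k)}=\nu_k$ on $X_{k+1,0}$ is exactly that argument. One small indexing slip to fix: the toric endpoint of the chain is $X_{4,0}=X_{3,1}=X_0^3=\{x_1^2+x_2^2+x_3^2=0\}$, not $X_{3,0}=\{x_1^2+x_2^2=0\}$ (which, as you yourself note at the end, is a reducible union of two hyperplanes); since $\lambda_1^{(2)}=\nu_2$ identically, $\Phi_3$ is already the toric moment map on $X_{3,1}$ and no flow is required at that last stage.
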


\section{Vanishing cycle on the quadric surface}
 \label{sc:vc}

In this section, we consider the family $\mathfrak{X}^3$
in the case of $n=4$.
Note that the embedding 
\[
  X = \bP^1 \times \bP^1 \longrightarrow \bP^3 ,
  \quad 
  ([z_0:z_1],[w_0:w_1]) \longmapsto
  [x_1 : x_2 : x_3 : x_4]
\]
is given by
\begin{align*}
    x_1 &= x_0 y_0 - x_1 y_1,\\
    x_2 &= \sqrt{-1}(x_0 y_1 - x_1 y_0),\\
    x_3 &= x_0 y_1 + x_1 y_0,\\
    x_4 &= \sqrt{-1}(x_0 y_0 + x_1 y_1),
\end{align*}
and then the restriction of the Fubini-Study metric 
$\omega_{\mathrm{FS}}$ to $\bP^1 \times \bP^1$ coincides with 
$p_1^* \omega_{\bP^1} + p_2^* \omega_{\bP^1}$,
where $\omega_{\bP^1}$ is the Fubini-Study metric on $\bP^1$ 
and $p_i: \bP^1 \times \bP^1 \to \bP^1$ is the $i$-th projection
for $i= 1,2$.
The anti-diagonal subset
\[
  L = \{ ([x_0: x_1], 
    [\overline{x}_0 : \overline{x}_1 ] )
    \in \bP^1 \times \bP^1 \, | \, 
    [x_0: x_1] \in \bP^1 \}
\]
is a Lagrangian sphere with respect to this symplectic form,
and its image in $\bP^3$ is given by
\[
  L = \{ [x_1: \dots : x_4] \, | \,
         x_1^2 + \dots + x_4^2 = 0, \, 
         x_1, x_2, x_3 \in \bR, \,
          x_4 \in \sqrt{-1}\bR \, \}.
\]

Recall that the family $\mathfrak{X}^3$ is defined by
the equation $x_1^2 + x_2^2 + x_3^2 + t x_4^2 = 0$.
Hence the fiber $X_t$ over $t \in \bC$ is given by
$X_t = f^{-1}(t)$ for a rational function
\[
  f= - \frac{x_1^2 + x_2^2 + x_3^2}{x_4^2}
  : \bP^3 \longrightarrow \bP^1.
\]

\begin{lemma}
The gradient-Hamiltonian flow of $f$ on $\bP^3$
sends $L$ to the singular point of $X_0$.
\end{lemma}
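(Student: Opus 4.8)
The plan is to exploit an antiholomorphic symmetry of the picture, which pins down the entire trajectory of $L$ under the flow and reduces the lemma to an explicit limit rather than to a local analysis of the flow near the singular fibre.

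Consider the involution $\sigma\colon \bP^3\to\bP^3$ given by $[x_1:x_2:x_3:x_4]\mapsto [\overline{x}_1:\overline{x}_2:\overline{x}_3:-\overline{x}_4]$. Being the composition of complex conjugation with a unitary transformation, it is an antiholomorphic isometry of $(\bP^3,\omega_{\mathrm{FS}})$ with $\sigma^2=\mathrm{id}$. First I would record three facts. (i) $f\circ\sigma=\overline{f}$, since $f$ is a quotient of real quadratic forms; hence $\sigma$ preserves each fibre $X_t$ with $t\in\bR$ and fixes $\re f$. (ii) $L=\mathrm{Fix}(\sigma)\cap X_1$: the description of $L\subset\bP^3$ recalled above is precisely "$x_1,x_2,x_3\in\bR$ and $x_4\in\sqrt{-1}\,\bR$" together with the quadric relation, i.e. $\sigma(x)=x$ together with $x\in X_1$. (iii) For $t\in[0,1]$ the locus $\mathrm{Fix}(\sigma)\cap X_t$ is a round $2$-sphere whose radius is proportional to $\sqrt{t}$, shrinking to the singular point of $X_0$ as $t\to 0$. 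For (iii) I would work in the affine chart $x_4\neq 0$, which contains all of $\mathrm{Fix}(\sigma)\cap X_t$ because in $\{x_4=0\}$ a nonzero real vector cannot satisfy $x_1^2+x_2^2+x_3^2=0$; normalising $x_4=1$ turns $\sigma$ into $x\mapsto-\overline{x}$, with fixed locus $(\sqrt{-1}\,\bR)^3$, and turns $f$ into $g=-(x_1^2+x_2^2+x_3^2)$, so that writing $x_j=\sqrt{-1}\,u_j$ gives $\mathrm{Fix}(\sigma)\cap X_t=\{u\in\bR^3 : |u|^2=t\}$. At $t=1$ this is $L$, and as $t\to 0$ it contracts to the origin, which is the point $[0:0:0:1]$, the unique singular point of the quadric cone $X_0=\{x_1^2+x_2^2+x_3^2=0\}$.

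The flow is then forced. On $\bP^3\setminus\{x_4=0\}$, where $f$ is the regular function $g$, the (suitably normalised) gradient-Hamiltonian vector field $V$ of $f$ is $-\nabla\re f$ up to a positive rescaling; it is defined away from the single critical point $[0:0:0:1]$, its flow $\phi_s$ preserves $\im f$, and it sends $X_c$ to $X_{c-s}$. Since $\sigma$ is an isometry with $\re f\circ\sigma=\re f$, we have $d\sigma(\nabla \re f)=\nabla \re f$ and $|\nabla\re f|$ is $\sigma$-invariant, so $V$ is $\sigma$-invariant; hence $\phi_s$ commutes with $\sigma$ and therefore preserves $\mathrm{Fix}(\sigma)$. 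As $L\subset\{\re f=1,\ \im f=0\}$, it follows that $\phi_s(L)\subseteq\mathrm{Fix}(\sigma)\cap X_{1-s}$, which by (iii) is the sphere of radius proportional to $\sqrt{1-s}$; since $\phi_s|_L$ is a smooth embedding of the $2$-sphere $L$ into this $2$-sphere, it is a diffeomorphism onto it by invariance of domain, so $\phi_s(L)=\mathrm{Fix}(\sigma)\cap X_{1-s}$. Thus $\phi_s(L)$ stays in a fixed compact set, away from the critical point, for all $s<1$, so the flow is defined on $L$ throughout $s\in[0,1)$, and $\phi_s(L)$ contracts to the singular point $[0:0:0:1]$ of $X_0$ as $s\to1^-$.

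The point to handle with care is this very limit: the gradient-Hamiltonian flow does not extend across the critical value $t=0$, so "sends $L$ to the singular point" must be read as the limiting statement just obtained, and I expect that to be the only delicate issue. The symmetry argument makes it painless; the alternative would be a direct analysis of the flow near the Lefschetz critical point $[0:0:0:1]$ of the smooth total space $\mathfrak{X}^3$, where near that point $\mathfrak{X}^3$ is the standard node $\{z_1^2+z_2^2+z_3^2+t=0\}$ and one would have to show by hand that a neighbourhood of $L$ contracts to it. I would also note in passing that this computation identifies $L$ as the vanishing cycle of the degeneration $\mathfrak{X}^3$, which is the role it plays in the remainder of the section.
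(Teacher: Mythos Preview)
Your argument is correct, but it is not the route the paper's proof takes. The paper exploits a different symmetry: the $SO(3,\bR)$-action on $(x_1,x_2,x_3)$. Working in the affine chart $x_4=\sqrt{-1}$, it observes that $\omega_{\mathrm{FS}}$, $L$, and $f$ are all $SO(3,\bR)$-invariant, so it suffices to follow the single trajectory starting at $(1,0,0)$. It then shows directly that the gradient-Hamiltonian flow preserves the locus $\{x_2=x_3=\im x_1=0\}$ and, in polar coordinates $x_1=re^{\sqrt{-1}\theta}$ on that locus, computes the Hamiltonian vector field of $\im f$ to be $-4r(1+r^2)^2\,\partial/\partial r$, which manifestly drives $r$ to $0$.

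Your involution argument is cleaner: it requires no explicit form of the metric or the vector field, and the identification $\mathrm{Fix}(\sigma)\cap X_t\cong\{|u|^2=t\}$ gives the whole family of vanishing spheres at once, not just one trajectory. The paper's explicit computation, on the other hand, actually exhibits the flow and confirms finite-time arrival at the singular point rather than inferring it from a limit. It is worth noting that the paper itself records essentially your proof in a remark immediately following its own: it introduces the same involution $\iota\colon[x_1{:}x_2{:}x_3{:}x_4]\mapsto[\overline{x}_1{:}\overline{x}_2{:}\overline{x}_3{:}-\overline{x}_4]$, observes $L=X_1\cap(\bP^3)^{\iota}$ and that $\nabla\re f$ is $\iota$-invariant, and concludes that $L$ is carried to $X_0\cap(\bP^3)^{\iota}=[0{:}0{:}0{:}1]$. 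So you have independently found the alternative argument the authors had in mind.
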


\begin{proof}
We regard $(x_1, x_2, x_3)$ as a coordinate on 
$\{x_4 = \sqrt{-1} \} \cong \bC^3$ so that $L$ is given by
\[
  L = \{ (x_1, x_2, x_3) \in \bR^3 \, | \,
         x_1^2 + x_2^2 + x_3^2 = 1 \}
    \subset \bC^3.
\]
Since $\omega_{\mathrm{FS}}$, $L$, and $f$ are 
invariant under the natural $SO(3,\bR)$-action,
it suffices to show that the gradient-Hamiltonian
flow starting at $(1,0,0) = [1:0:0:\sqrt{-1}] \in L$ 
goes to the singular point 
$(0,0,0) = [0:0:0:\sqrt{-1}] \in X_0$.
More precisely,
we will show that the orbit of the gradient trajectory
is given by
$\{ x_2 = x_3 = \mathrm{Im}\, x_1 = 0 \}$.

Recall that the gradient-Hamiltonian vector field of $f$
is defined by
\[
  V = - \frac{\nabla (\re f)}{| \nabla (\re f) |^2}
    = \frac{\xi_{\im f}}{|\xi_{\im f}|^2} 
\]
where $\nabla (\re f)$ is the gradient vector field of 
$\re f$, and
$\xi_{\im f}$ is the Hamiltonian vector field
of $\im f$.
The flow of $V$ sends a fiber $X_t$ to another one
(see \cite{Ruan_LTFQHI}).
Since
\[
  \omega_{\mathrm{FS}} =
    \frac{\sqrt{-1}}2 \partial \overline{\partial}
    \log (1 + \| x \|^2 )
\]
is given by
\[
  \omega_{\mathrm{FS}} =
    \frac{\sqrt{-1}}2
    \frac{dx_1 \wedge d \overline{x}_1}
         {(1 + | x_1 |^2)^2}
    + 
    \frac{\sqrt{-1}}2 \sum_{i=2,3}
    \frac{dx_i \wedge d \overline{x}_i}{1 + | x_1 |^2}
\]
on $\{ x_2= x_3 = 0 \}$,
and $df = - 2 \sum x_i d x_i = - 2 x_1 dx_1$ on 
$\{ x_2 = x_3 = 0\}$, the Hamiltonian vector field of 
$\mathrm{Im} f$ has only $\partial / \partial x_1$ and 
$\partial / \partial \overline{x}_1$ components.
This implies that the gradient-Hamiltonian flow preserves 
$\{x_2 = x_3 = 0 \}$.
Next we take be a polar coordinate $x_1 = r e^{\sqrt{-1} \theta}$.
Then we have 
\[
  \omega_{\mathrm{FS}} = \frac{r dr \wedge d \theta}{(1+r^2)^2}
\]
and 
\[
  \mathrm{Im} (df)
  =
  - \frac{2r}{\sqrt{-1}} \bigl( 
   e^{\sqrt{-1} \theta} 
  (dr + \sqrt{-1} r d \theta )
  -
  e^{- \sqrt{-1} \theta} 
  (dr - \sqrt{-1} r d \theta
  ) \bigr)
  = - 4 r^2 d \theta
\]
on $\{\theta = 0 , \, x_2 = x_3 = 0 \}$.
Hence the Hamiltonian vector field of
$\mathrm{Im} f$ is given by
\[
  - 4r (1 + r^2)^2 \frac{\partial}{\partial r},
\] 
which shows the above claim.
\end{proof}

\begin{remark}
$L$ is the fixed locus of the antiholomorphic involution
\[
  \iota : \bP^1 \times \bP^1 \longrightarrow 
  \bP^1 \times \bP^1,
  \quad
  ([z_0:z_1], [w_0:w_1]) \longmapsto
  ([\overline{w}_0:\overline{w}_1], 
    [\overline{z}_0:\overline{z}_1]).
\]
This involution extends to an involution
\[
  \iota : \bP^3  \longrightarrow 
  \bP^3,
  \quad
  [x_1:x_2:x_3:x_4] \longmapsto
  [\overline{x}_1:\overline{x}_2:
  \overline{x}_3:-\overline{x}_4].
\]
Since the Fubini-Study metric and $\mathrm{Re} (f)$ are
$\iota$-invariant,
the gradient vector field $\nabla \mathrm{Re} (f)$ is
also $\iota$-invariant.
In particular, $\nabla \mathrm{Re} (f)$ preserves 
the fixed locus $(\bP^3)^{\iota}$.
This also shows that the gradient-Hamiltonian flow
sends 
\[
  L = (\bP^1 \times \bP^1)^{\iota}
  = X_1 \cap (\bP^3)^{\iota}
\]
to the singularity
$[0:0:0:1] = X_0 \cap (\bP^3)^{\iota}$
of $X_0 = \bP(1,1,2)$.
\end{remark}

Now a result of Fukaya, Oh, Ohta and Ono
shows the following:

\begin{theorem}[{\cite[Theorem 1.9]{FOOO_ASIFC}}]
The $A_\infty$-structure on $H^*(L; \Lambda_0)$
for the anti-diagonal Lagrangian submanifold $L$ satisfies
$
 \frakm_0 = \frakm_1 = 0
$
and $\frakm_2$ can be identified with the quantum cup product
on $H^*(\bP^1; \Lambda_0) \cong H^*(L; \Lambda_0)$.
\end{theorem}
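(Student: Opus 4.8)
The plan is to organise everything around the antiholomorphic involution $\iota$ of the preceding Remark, for which $L = (\bP^1\times\bP^1)^{\iota} = \mathrm{Fix}(\iota)$. (It is also useful to note that $(\mathrm{id},c)\colon \bP^1\times\bP^1\to\bP^1\times\overline{\bP^1}$, with $c$ complex conjugation on the second factor and $\overline{\bP^1}$ the opposite complex structure, is a symplectomorphism carrying $L$ onto the diagonal $\Delta_{\bP^1}$; I will return to this at the end.) So $L$ is a Lagrangian sphere realised as the fixed locus of an antiholomorphic involution, and I would use two standard facts about such Lagrangians: that holomorphic disks with boundary on $L$ double to holomorphic spheres, and the resulting constraint on the Maslov index.

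First I would pin down the minimal Maslov number of $L$. Any holomorphic disk $u\colon (D^2,\partial D^2)\to(\bP^1\times\bP^1,L)$ doubles, by Schwarz reflection through $\iota$, to a holomorphic sphere $\widehat u\colon \bP^1\to\bP^1\times\bP^1$ with $\mu([u]) = \langle c_1,[\widehat u]\rangle$ and with $\iota\circ\widehat u = \widehat u\circ\phi$ for an orientation-reversing antiholomorphic involution $\phi$ of $\bP^1$, whence $\iota_*[\widehat u] = -[\widehat u]$. Writing $[\widehat u] = d\,A+e\,B$ with $A = [\bP^1\times\{\mathrm{pt}\}]$, $B = [\{\mathrm{pt}\}\times\bP^1]$, the relations $\iota_*(A) = -B$, $\iota_*(B) = -A$ force $d = e$, and effectivity of $\widehat u$ gives $[\widehat u] = d(A+B)$ with $d\ge 1$. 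Hence every nonconstant disk has $\mu\ge c_1(A+B) = 4$. In particular there are no Maslov-$2$ disks, so $\frakm_0 = 0$; and since $H^1(S^2) = 0$, a degree count on $H^*(L) = H^0\oplus H^2$ shows that no disk class of any Maslov index can feed into $\frakm_1$, while the classical part of $\frakm_1$ vanishes on the canonical (cohomology) model — so $\frakm_1 = 0$ and $L$ is unobstructed.

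It remains to identify $\frakm_2$ on $H^*(L;\Lambda_0)\cong H^*(S^2;\Lambda_0)$. Its classical part is the cup product, which under $1\mapsto 1$, $[\mathrm{pt}]_L\mapsto h$ matches the cup product on $H^*(\bP^1;\Lambda_0)$; the fundamental class is a strict unit, so the only entry that can receive a quantum correction is $\frakm_2([\mathrm{pt}],[\mathrm{pt}])$. By the dimension and degree count above, only the minimal disks can contribute — these have Maslov index $4$, symplectic area $E$ equal to that of the $\bP^1$-factor (half the area of $A+B$), and their contribution lands in $H^0$ — so $\frakm_2([\mathrm{pt}],[\mathrm{pt}]) = c\,T^{E}\cdot 1$ for some $c\in\bZ$. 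I would compute $c$ via the doubling correspondence: these disks correspond to $\iota$-real rational curves of class $A+B$ through two prescribed points of $L$, namely graphs of Möbius transformations $\varphi$ with $\varphi\circ c$ an antiholomorphic involution with nonempty fixed circle, and the (signed, weighted) count of such is $1$. This matches $h\ast h = T^{E}\cdot 1$ in $QH^*(\bP^1;\Lambda_0)$, giving the identification of $\frakm_2$ with the quantum cup product. Alternatively, once $L$ is recognised as the diagonal in $\bP^1\times\overline{\bP^1}$, the whole statement follows from the known identification of the Floer $A_\infty$-algebra of the diagonal with the quantum cohomology of $\bP^1$.

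The main obstacle is the last step: making the real disk count rigorous — coherent orientations on moduli of disks with boundary on $L$, transversality (or Kuranishi perturbations) for the relevant moduli, excluding nodal and reducible configurations, and pinning down the sign $c = 1$. This is precisely the technical content supplied by the machinery of Fukaya, Oh, Ohta and Ono relating $\frakm_k$ to open, and via doubling closed, Gromov--Witten-type invariants; everything preceding it is soft.
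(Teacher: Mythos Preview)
The paper does not prove this theorem: it is quoted verbatim as \cite[Theorem 1.9]{FOOO_ASIFC}, a result of Fukaya, Oh, Ohta and Ono, and no argument is supplied here. So there is no proof in the present paper to compare your proposal against.

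That said, your outline is sound and follows one of the standard routes to this statement. The Maslov bound $\mu\ge 4$ via doubling through $\iota$ is correct (with $\iota_*A=-B$, $\iota_*B=-A$ forcing the doubled class into $\bZ(A+B)$), and together with $H^{\mathrm{odd}}(S^2)=0$ this kills $\frakm_0$ and $\frakm_1$ on the canonical model by the degree count you indicate. The only quantum correction to $\frakm_2$ indeed lands in $\frakm_2([\mathrm{pt}],[\mathrm{pt}])=c\,T^{E}\cdot 1$ with $E=\int_{\bP^1}\omega_{\bP^1}$, and pinning down $c=1$ with signs is, as you acknowledge, the genuine analytic content. Your alternative at the end---identifying $(\bP^1\times\bP^1,L)$ with $(\bP^1\times\overline{\bP^1},\Delta_{\bP^1})$ via $(\mathrm{id},c)$ and invoking the equivalence of the Floer $A_\infty$-algebra of the diagonal with the quantum cohomology of the factor---is the cleanest way to finish, and is the line of argument closest to what the cited FOOO reference actually does.
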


Since the quantum cohomology ring of $\bP^1$ is 
isomorphic to the direct sum of two copies of $\Lambda_0$,
a standard deformation theory argument shows that
the $A_\infty$-structure on $H^*(L; \Lambda)$ is equivalent
to the trivial one,
so that the idempotent completion of the full triangulated subcategory
of the derived Fukaya category of $\bP^1 \times \bP^1$
generated by $L$ is equivalent
to the direct sum of two copies of the derived category of $\Lambda$-modules.


\section{Deformation of singular disks in toric weak Fano surfaces}
 \label{sc:f2}

We develop a theory concerning the deformability of 
 singular disks in some toric surfaces.
Consider a degenerating family $\pi: \mathfrak X\to \Bbb C$
 of $\Bbb P^1\times \Bbb P^1$ to
 the Hirzebruch surface $F_2$. 
The surface $F_2$ has the unique $(-2)$-curve, which we write as $D$.
Since $F_2$ is a toric surface, fixing some invariant K\"ahler form $\omega$, 
 the moment map $\mu: F_2\to \Delta$ is defined, where
 $\Delta$ is a suitable trapezoid.
Take a point $u$ in the interior of $\Delta$ and let $L(u)$ be the fiber of $\mu$
 over $u$.

Since the family $\mathfrak X$ is the product $F_2\times\Bbb C$ as a
 differentiable manifold, $\omega$ together with the standard K\"ahler form
 on $\Bbb C$ defines a K\"ahler form on $\mathfrak X$, making
 $\mathfrak X\to \Bbb C$ a toric degeneration of integrable systems
 (the structure of integrable systems on the fibers over $q\in\Bbb C\setminus\{0\}$
 is defined by the pullback of that of $F_2$ by the gradient-Hamiltonian flow
 along some fixed path $\gamma$,
 which gives a toric degeneration of the Gelfand-Cetlin 
 system on $\bP^1 \times \bP^1$).                       
Let $\gamma: [0, 1]\to \Bbb C$, $\gamma(1) = 0$, and let
 $X_s$ be the fiber of $\pi$ over $\gamma(s)$.

There is a unique family of disks of Maslov index two:
\[
(D^2, S^1)\to (F_2, L(u))
\]
 which intersect the curve $D$.
Let $\alpha\in H_2(F_2, L(u);\Bbb Z)$ be the homology class of their images.
We consider stable maps
\[
\phi: (C, S^1)\to (F_2, L(u)) 
\]
 with the following properties.
\begin{itemize}
\item $(C, S^1)$ is a prestable bordered Riemann surface of genus zero, one
 boundary component and without marked point.
\item The homology class of the image is $\alpha+ k[D]$, $k\geq 0$.
\end{itemize}
Pulling back $L(u)$ by the gradient Hamiltonian flow along $\gamma$, 
 we have a family of Lagrangian tori $\{L(u)_{s}\}$, $s\in [0, 1]$, $L(u)_{1} = L(u)$.
The problem we consider is to determine when the stable map 
 $\phi$ can be lifted to a stable map to $(X_s, L(u)_{s})$. 

Let us first consider the case with $k = 0$.
The deformation of $\phi$ is controlled 
 by the tangent-obstruction complex as usual, provided the relevant sheaves
 are replaced by Riemann-Hilbert sheaves.
Such a deformation theory is developed in \cite{Nishinou_DCTVTC}.
We refer to it for precise 
 arguments, and here we omit the technicalities. 
In the case $k = 0$, the relevant complex in $F_2$ is,
\[
0\to (\Theta_{D^2}, \Theta_{S^1})\to 
 \phi^*(\Theta_{F_2}, \Theta_{L(u)})\to \mathcal N\to 0.
\]
Here one easily sees that the normal bundle $\mathcal N$ is isomorphic to the
 trivial Riemann-Hilbert line bundle
 $(\mathcal O_{D^2}, \mathcal O_{S^1})$.

We try to lift $\phi$ to generic fibers order by order.
The obstruction to the lift at each step is controlled by
\[
H^1(D^2, S^1; \mathcal N) = 0,
\]
 so the lifts exist in this case.
The lifts are locally parametrized by 
\[
H^0(D^2, S^1; \mathcal N) = \Bbb R,
\]
 and clearly the global moduli of the lift is isomorphic to that of $\phi$.
Namely, they are parametrized by $S^1$, and the boundary sweeps the fiber
 $L(u)_{s}$ as we move the maps.

Now we move to  the cases with $k>0$.
Note that the Maslov indices of the maps are two for all $k$.

Consider the case with $k = 1$.
In this case, the domain curve is the one point union $C = D^2\cup\Bbb P^1$.
So the moduli of these maps are the same as the case with $k = 0$.

The tangent-obstruction complex for $\phi$, which do not change the structure of
 the singularity of $C$ is, 
\[
0\to \Theta'_C\to \phi^*\Theta_{F_2}\to \mathcal N'\to 0, 
\]
 here $\Theta'_C$ is the sheaf of tangent vector fields on $C$,
see \cite{Kontsevich_ERCTA}.
To include the deformation which smooth the singularity of $C$,
 it suffices to replace $\Theta'_C$ by the logarithmic tangent bundle
 $\Theta_C$.
Then the tangent-obstruction complex becomes
\[
0\to \Theta_C\to \phi^*\Theta_{F_2}\to \mathcal N\to 0.
\]
Note that $\Theta_C$ is an invertible sheaf which restricts to 
 $\mathcal O(1)$ on the component $\Bbb P^1$ of $C$, 
 and to the Riemann-Hilbert bundle $\mathcal E$ on $D^2$,
 such that the doubling of $\mathcal E$ is isomorphic to 
 $\mathcal O_{\Bbb P^1}(-1)$ (the isomorphism class of $\mathcal E$ is
 uniquely determined by this condition).

In this case, one sees that $\mathcal N$ is a line bundle, 
 which restricts to $\mathcal O(-1)$ on $\Bbb P^1$ and 
 to the Riemann-Hilbert bundle $\mathcal E'$ on $D^2$,
 such that the doubling of $\mathcal E'$ is isomorphic to 
 $\mathcal O_{\Bbb P^1}(1)$.
Then it is easy to see that the obstruction
\[
H^1(C, S^1;\mathcal N)
\]
 vanishes, so that $\phi^*$ lifts in this case, too.

However, since there is no $(-2)$-curve in $\Bbb P^1\times\Bbb P^1$, 
 the singular point of $C$ has to be smoothed, so that the lifts are maps from 
 the disk.

The tangent space of the moduli is
\[
H^0(C, S^1;\mathcal N)\cong \Bbb R,
\]
 as in the case with $k = 0$.
The global moduli is also the same as this case.

Finally, we consider the cases with $k>1$.
As we remarked above, when there is a lift of $\phi$, the lift
 has to be a map from the disk.
Let us assume there is such a lift $\psi_k$.
On the other hand, let $\psi_1$ be a lift of $\phi$ for $k = 1$ constructed above.
Then the intersection number of the images of these maps 
 can be calculated in $F_2$ as follows:
\[
((\phi|_{D^2})_*[D^2]+k[D])\cdot ((\phi|_{D^2})_*[D^2]+[D])
 = (k+1)+k\cdot(-2)
 = 1-k<0.
\]
However, the lifts $\psi_k$ and $\psi_1$ are different holomorphic curves, 
 and it is impossible to have negative (local) intersection.
So there is no lift when $k>1$.
This classifies the possibilities of the existence of lifts of stable disks for the case of 
 $F_2$.
 
The case of (smooth) toric degeneration of cubic surfaces can be 
 calculated with minor changes.
The central fiber (toric surface) has nine boundary components, 
 which is a cycle of rational curves with self intersection numbers
 \[
 -1, -2, -2, -1, -2, -2, -1, -2, -2.
\]
Let $(D', D'')$ be one of the three pairs of neighboring $(-2)$-curves. 
The liftable stable disks with singular domain curves are embeddings whose images are
 classified as follows:
\[
D^2\cup D', \;\; D^2\cup D'',\;\; D^2\cup D'\cup D'',
\]
 where the pair $(D', D'')$ can be any of the three.
Note that the $D^2\cup D'\cup D''$ case has two possibilities, according to which 
 of the $(-2)$-curves the disk intersects.

{\bf Acknowledgment}:
T.~N. is supported by Grant-in-Aid for Young Scientists (No.19740034).
Y.~N. is supported by Grant-in-Aid for Young Scientists (No.19740025).
K.~U. is supported by Grant-in-Aid for Young Scientists (No.18840029).
K.~U. also thanks the Mathematical Institute at the University of Oxford
for hospitality and Engineering and Physical Sciences Research Council
for financial support.


\bibliographystyle{plain}
\bibliography{bibs}


\end{document}